\documentclass[reqno]{amsart}
\usepackage{amsfonts,amsmath,amssymb}
\usepackage{graphicx}
\usepackage[dvipsnames]{xcolor}

\numberwithin{equation}{section}

\newtheorem{theorem}{Theorem}[section]
\newtheorem{lemma}{Lemma}[section]

\DeclareFontFamily{U}{mathx}{\hyphenchar\font45}
\DeclareFontShape{U}{mathx}{m}{n}{
<5> <6> <7> <8> <9> <10>
<10.95> <12> <14.4> <17.28> <20.74> <24.88>
mathx10
}{}
\DeclareSymbolFont{mathx}{U}{mathx}{m}{n}
\DeclareFontSubstitution{U}{mathx}{m}{n}
\DeclareMathAccent{\widecheck}{0}{mathx}{"71}

\usepackage[margin=1.30in]{geometry}
\setlength{\textwidth}{15.8cm}
\setlength{\oddsidemargin}{0.2cm}
\setlength{\evensidemargin}{0.2cm}
\numberwithin{equation}{section}

\begin{document}
\title[Global endpoint Strichartz estimates on $\mathbb{R}\times\mathbb{T}$]
{Global endpoint Strichartz estimates for Schr\"odinger equations on the cylinder $\mathbb{R}\times\mathbb{T}$}

\author{Alexander Barron}
\address{University of Illinois at Urbana-Champaign}
\email{aabarron@illinois.edu}
\author{Michael Christ}
\address{University of California, Berkeley}
\email{mchrist@berkeley.edu}
\author{Benoit Pausader}
\address{Brown University}
\email{benoit\_pausader@brown.edu}

 \thanks{The second author was supported in part by NSF grant DMS-1901413. The third author was supported in part by NSF grant DMS-1700282.}

\maketitle

\section{Long-time, scaling-critical Strichartz estimates on $\mathbb{R}\times\mathbb{T}$}

Define the norm on $\mathbb{R}\times\mathbb{R}\times\mathbb{T}=(\mathbb{Z}+[0,1))\times\mathbb{R}\times \mathbb{T}$:
\begin{equation}\label{MixedTypeNorms}
\Vert u\Vert^a_{\ell^aL^b(\mathbb{R},L^c(\mathbb{R}\times\mathbb{T}))}:=\sum_{\gamma\in\mathbb{Z}}\left(\int_{s\in[0,1)}\left( \int_{x,y\in\mathbb{R}\times\mathbb{T}}\vert u(\gamma+s,x,y)\vert^c\, dxdy\right)^\frac{b}{c}ds\right)^\frac{a}{b}.
\end{equation}
In this paper, we prove the following global in time Strichartz-type estimate:
\begin{theorem}
There exists $C<\infty$ such that for all $f\in L^2(\mathbb{R}\times\mathbb{T})$,
\begin{equation}\label{Stric}
\begin{split}
\Vert e^{it\Delta_{\mathbb{R}\times\mathbb{T}}}f\Vert_{\ell^8L^4(\mathbb{R},L^4(\mathbb{R}\times\mathbb{T}))}&\le  C\Vert f\Vert_{L^2(\mathbb{R}\times\mathbb{T})}.
\end{split}
\end{equation}
\end{theorem}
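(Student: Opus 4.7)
My plan is to reduce to the classical one-dimensional endpoint Strichartz estimate $\|e^{it\partial_x^2}g\|_{L^8_tL^4_x(\mathbb{R}^2)}\lesssim\|g\|_{L^2(\mathbb{R})}$ by Fourier-decomposing in the periodic variable $y$, and then to combine the resulting $1$D estimates using Parseval's identity on $|u|^2$. Concretely, write $f(x,y)=\sum_{k\in\mathbb{Z}}f_k(x)e^{iky}$ and set $v_k(t,x):=e^{it\partial_x^2}f_k(x)$, so that $u(t,x,y)=\sum_k e^{-itk^2+iky}v_k(t,x)$. The $1$D endpoint Strichartz gives $\|v_k\|_{L^8_tL^4_x(\mathbb{R}^2)}\lesssim\|f_k\|_{L^2(\mathbb{R})}$, while $\sum_k\|f_k\|_{L^2}^2=\|f\|_{L^2}^2$. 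Applying Parseval to $|u|^2(t,x,\cdot)$ in $y$ yields
\[
\|u(t,x,\cdot)\|_{L^4_y(\mathbb{T})}^4 = 2\pi\sum_{n\in\mathbb{Z}}|c_n(t,x)|^2,\qquad c_n := \sum_k e^{-it(2kn+n^2)}v_{k+n}(t,x)\overline{v_k(t,x)}.
\]

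The heart of the proof is then the square-function estimate
\[
\|u\|_{L^4(I_\gamma\times\mathbb{R}\times\mathbb{T})}^4 \leq C\Big(\sum_k\|v_k\|_{L^8(I_\gamma)L^4_x}^2\Big)^2,\qquad I_\gamma=[\gamma,\gamma+1].
\]
For the diagonal term $n=0$, $c_0=\sum_k|v_k|^2$ is nonnegative and the bound follows from Cauchy--Schwarz in $k$ and H\"older in $t$ on the unit interval. For off-diagonal $n\neq 0$, the modulations $e^{-2itnk}$ introduce oscillation in $t$; I would expand $|c_n|^2$ as a quadruple sum $\sum_{j_1,j_2}$, integrate the factor $e^{-2itn(j_1-j_2)}$ over $I_\gamma$, and couple this with the $x$-integration and a bilinear Strichartz bound for the products $v_av_b$ so as to extract enough cancellation to control the total off-diagonal contribution by the same right-hand side.

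Given this square-function bound, the proof concludes by raising to the power $2$ and summing in $\gamma$. Setting $a_{k,\gamma}:=\|v_k\|_{L^8(I_\gamma)L^4_x}^2$ and using Minkowski's inequality for $\ell^4_\gamma$,
\[
\Big(\sum_\gamma\|u\|_{L^4(I_\gamma)L^4_{x,y}}^8\Big)^{1/4}\leq C^{1/2}\sum_k\Big(\sum_\gamma a_{k,\gamma}^4\Big)^{1/4} = C^{1/2}\sum_k\|v_k\|_{L^8_tL^4_x}^2\lesssim \sum_k\|f_k\|_{L^2}^2 = \|f\|_{L^2}^2,
\]
which gives \eqref{Stric}.

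The hard part is the square-function estimate itself. It is strictly stronger than the local-in-time $L^4$ Strichartz on $\mathbb{R}\times\mathbb{T}$, which only delivers the $\gamma$-independent bound $\|u\|_{L^4(I_\gamma\times\mathbb{R}\times\mathbb{T})}\lesssim\|f\|_{L^2}$---not $\ell^8_\gamma$-summable. The off-diagonal analysis requires delicate quadrilinear cancellation between the phases $e^{-2itnm}$ (with $m=j_1-j_2$) and the bilinear products $v_a\bar v_b$ integrated over $I_\gamma\times\mathbb{R}$, in order to avoid logarithmic losses when summing over $n\neq 0$. This is where the cylindrical geometry of $\mathbb{R}\times\mathbb{T}$---mixing a dispersive $\mathbb{R}$-direction with a non-dispersive $\mathbb{T}$-direction---enters in an essential way, and where I expect the bulk of the technical effort to lie.
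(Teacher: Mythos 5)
Your reduction scheme is clean and the final summation step is correct: \emph{if} the square-function estimate
\[
\|u\|_{L^4(I_\gamma\times\mathbb{R}\times\mathbb{T})}^4 \leq C\Big(\sum_k\|v_k\|_{L^8(I_\gamma)L^4_x}^2\Big)^2
\]
were available, the passage to $\eqref{Stric}$ via Minkowski in $\ell^4_\gamma$ and the $1$D endpoint Strichartz is exactly right. The Parseval decomposition $\|u\|_{L^4_y}^4 = 2\pi\sum_n|c_n|^2$ and the treatment of the diagonal $n=0$ term are also fine.

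However, the proposal has a genuine gap at its center: the square-function estimate is asserted, not proved, and the off-diagonal analysis --- the part you yourself flag as ``the bulk of the technical effort'' --- is only described in outline (``expand, integrate the phase $e^{-2itn(j_1-j_2)}$ over $I_\gamma$, and couple with a bilinear Strichartz bound''). This is precisely where the entire difficulty of the theorem lives, and the proposal does not supply the mechanism that makes the off-diagonal sum over $n$ converge with the required $\gamma$-dependence. Note that the estimate is strictly stronger than Takaoka--Tzvetkov's local $L^4$ bound: it demands that the right-hand side be $\ell^4_\gamma$-summable, which is the whole new content of $\eqref{Stric}$. It is also not at all clear that ``a bilinear Strichartz bound for $v_av_b$'' can close the argument --- for example, restricting to data of the form $f_k = c_k g$ with $g$ a fixed $x$-bump, your square-function estimate would essentially need a mixed-norm $L^8_tL^4_y$ bound for $e^{it\Delta_\mathbb{T}}$ on the circle, which is not a standard consequence of Bourgain's $L^4_{t,y}$ estimate; whether your estimate survives in this regime would need a genuine argument.

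The paper's actual proof takes a rather different route: it works entirely on the Fourier side, replaces the time-restriction to $[\gamma,\gamma+1]$ by a Gaussian weight $e^{-\frac14(t-\gamma)^2}$, expands $J_\gamma^8$, and applies \emph{Poisson summation in $\gamma$} to convert $\sum_\gamma e^{-i\gamma[Q-Q']}$ into a sum of delta functions on the resonance set $Q-Q'\in 2\pi\mathbb{Z}$. This reduces matters to a Schur test for an explicit kernel on $\mathbb{R}^4\times\mathbb{Z}^4$, which is then bounded by a geometric counting argument: after polarization, the support lies on thin annuli in $\mathbb{R}\times\mathbb{Z}$, and the key input is Lemma~\ref{VolComp} bounding their measure by $\sqrt{Rw}+Rw$ (essentially the Takaoka--Tzvetkov lattice-point count). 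Your proposal does not introduce any analogue of this counting or of the Poisson-summation step, and without some such arithmetic/geometric input there is no reason to expect the off-diagonal sum in your square-function claim to close. As written, the proposal reformulates the theorem as an unproven --- and possibly more delicate --- estimate rather than proving it.
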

This inequality is saturated\footnote{In the sense that the quotient of both sides converges to a nonzero constant as $n\to\infty$.} by two different families of functions of $(x,y)\in\mathbb{R}\times\mathbb{T}$:
\begin{equation}\label{OptimizerStric}
\begin{split}
F_n(x,y)=nG(n\sqrt{x^2+y^2})\mathfrak{1}_{\{n(x^2+y^2)\le 1\}},\quad f_n(x,y)=n^{-\frac{1}{2}}G(n^{-1}x),
\end{split}
\end{equation}
where $G(s)=e^{-s^2}$ is a Gaussian. These correspond respectively to saturators for Strichartz estimates in $2d$ and in $1d$ \cite{St}. The exponents in \eqref{Stric} are optimal in the following sense: $(i)$ on the one hand, since $e^{it\Delta_{\mathbb{R}\times\mathbb{T}}}f_n(x,y)=n^{-\frac{1}{2}}(e^{in^{-2}t\partial_{xx}}G)(n^{-1}x)$ behaves as a (low-frequency) solution of the Schr\"odinger equation on $\mathbb{R}$, the exponent $8$ in \eqref{Stric} cannot be lowered; $(ii)$ on the other hand, since $e^{it\Delta_{\mathbb{R}\times\mathbb{T}}}F_n$ behaves as a (high-frequency) solution of the Schr\"odinger equation in $\mathbb{R}^2$ (see e.g. \cite[Lemma 4.2]{IoPa} for similar computations), the exponent $4$ cannot be changed if the righthand side is measured in $L^2$.

Interpolating with the estimate\footnote{This follows from variants of classical $TT^\ast$ estimates as in Ginibre-Velo \cite{GinVel}, see \cite[Section 3]{HaPa}.}
when $q=4$ and $p=\infty$,
\begin{equation*}
\begin{split}
\Vert e^{it\Delta_{\mathbb{R}\times\mathbb{T}}}P_{\le N}f\Vert_{\ell^4L^\infty(\mathbb{R},L^\infty(\mathbb{R}\times\mathbb{T}))}&\lesssim N\Vert f\Vert_{L^2(\mathbb{R}\times\mathbb{T})},\qquad \mathcal{F}\left\{P_{\le N}f\right\}(\xi,k)=\varphi(\xi/N)\varphi(k/N)\widehat{f}(\xi,k),
\end{split}
\end{equation*}
where $\varphi\in C^\infty_c(\mathbb{R})$ is a smooth bump function, and using boundedness of the square function, 
we obtain the family of scaling invariant Strichartz estimates on $\mathbb{R}\times\mathbb{T}$:
\begin{equation}\label{Stric2}
\begin{split}
\Vert e^{it\Delta_{\mathbb{R}\times\mathbb{T}}}f\Vert_{\ell^qL^p(\mathbb{R},L^p(\mathbb{R}\times\mathbb{T}))}&\lesssim \Vert f\Vert_{H^s(\mathbb{R}\times\mathbb{T})},\quad
\frac{2}{q}+\frac{1}{p}=\frac{1}{2},\quad 4< q\le8,\quad s=1-\frac{4}{p}.
\end{split}
\end{equation}

%\textcolor{blue}{Interpolating with the simple estimate\footnote{This follows from classical $TT^\ast$ estimates as in Ginibre-Velo \cite{GinVel}.}
%when $q=4$ and $p=\infty$, 
%%\begin{equation*}
%%\begin{split}
%%\Vert e^{it\Delta_{\mathbb{R}\times\mathbb{T}}}P_{\le N}f\Vert_{\ell^4L^\infty(\mathbb{R},L^\infty(\mathbb{R}\times\mathbb{T}))}&\lesssim N\Vert f\Vert_{L^2(\mathbb{R}\times\mathbb{T})},\qquad \mathcal{F}\left\{P_{\le N}f\right\}(\xi,k)=\mathfrak{1}_{\{-N\le \vert\xi\vert,\vert k\vert\le N\}}\widehat{f}(\xi,k)
%%\end{split}
%%\end{equation*}
%%and using boundedness of the square function, 
%we obtain the family of scaling invariant Strichartz estimates on $\mathbb{R}\times\mathbb{T}$:
%\begin{equation}\label{Stric2}
%\begin{split}
%\Vert e^{it\Delta_{\mathbb{R}\times\mathbb{T}}}f\Vert_{\ell^qL^4(\mathbb{R},L^p(\mathbb{R}\times\mathbb{T}))}&\lesssim \Vert f\Vert_{H^s(\mathbb{R}\times\mathbb{T})},\quad
%\frac{2}{q}+\frac{1}{p}=\frac{1}{2},\quad 4\le  q\le8,\quad s=1-\frac{4}{p}.
%\end{split}
%\end{equation}}

Strichartz-type inequalities with mixed norms in the time variable of the form \eqref{MixedTypeNorms} were introduced in \cite{HaPa} to study the asymptotic behavior of solutions to critical NLS on product spaces $\mathbb{R}^n\times\mathbb{T}^d$ which are examples of manifolds where the global dimension is smaller than the local dimension. Similar cases were later explored in \cite{ChGuZh,Zh1,Zh2} and the sharp results when $s>0$ was obtained in \cite{Ba} using results from $\ell^2$-decoupling \cite{BoDe}.

However, to study NLS with data in $L^2$, estimates with loss of derivatives are useless. This raised the question of whether a Strichartz-type inequality with no loss of derivatives could hold for Schr\"odinger equations on $d$-dimensional manifolds smaller at infinity than $\mathbb{R}^d$. For the torus $\mathbb{T}^d$, for instance, a lossless inequality like \eqref{Stric} does not hold, not even locally in time (that is, with $a=\infty$) as observed in \cite{Bo}. In fact, for manifolds ``smaller'' than $\mathbb{R}^2$, the only estimate known to the authors is the result from \cite{TaTz} which obtains local version of \eqref{Stric} (with $a=\infty$ instead of $a=8$). We refer e.g. to \cite{Bouc,BuGuHa,Chr} for the study of Strichartz estimates without losses in the presence of trapped geodesic.

As for nonlinear applications of \eqref{Stric}, one can easily show local well-posedness of the cubic NLS in $L^2(\mathbb{R}\times\mathbb{T})$, recovering the result in \cite{TaTz}. However, the long-time behavior is modified scattering as shown in \cite{HaPaTzVi}, which requires more information (and stronger control on initial data) than $L^2$-Strichartz estimates and it remains a challenging open question as to whether nonlinear solutions satisfy global bounds of the type \eqref{Stric}.

This leaves open some interesting questions:
\begin{enumerate}

\item Can one extend this result to other semi-periodic settings, i.e., does an estimate like
\begin{equation*}
\begin{split}
\Vert e^{it\Delta_{\mathbb{R}^d\times\mathbb{T}^n}}f\Vert_{\ell^qL^p(\mathbb{R},L^p(\mathbb{R}^d\times\mathbb{T}^n))}\lesssim \Vert f\Vert_{L^2(\mathbb{R}^d\times\mathbb{T}^n)},\quad p=\frac{2(n+d+2)}{n+d},\,\, q=\frac{2(n+d+2)}{d}.
\end{split}
\end{equation*}
hold? This is settled for $n+d\le 2$, but for higher values, $p<4$ and the problem is much more challenging.

\item Can one understand and characterize optimizers of \eqref{Stric}? In principle, introducing a parameter for the length of the torus (or the local time interval), one may expect that optimizers should vary smoothly between the two families in \eqref{OptimizerStric}.

\item Can one obtain a good profile decomposition, i.e., study the defect of compactness of bounded sequences in $L^2(\mathbb{R}\times\mathbb{T})$?

\end{enumerate}

\section{Proof of Theorem \ref{Stric}}

Since the analysis is done purely in the frequency space, we pass to the Fourier transform and consider $f\in L^2(\mathbb{R}\times\mathbb{Z})$, which corresponds to the Fourier transform of the function in \eqref{Stric}. By homogeneity, we may choose $f$ to be of unit $L^2$ norm and by density we may assume that $f$ is compactly supported so that all integrals below converge absolutely. We let $\mathbb{T}=\mathbb{R}/2\pi\mathbb{Z}$ and we define the Fourier transform on $\mathbb{R}\times\mathbb{Z}$
\begin{equation*}
\begin{split}
\widehat{f}(x,y)=\sum_{k\in\mathbb{Z}}\int_{\mathbb{R}}f(\xi,k)e^{ix\xi}e^{iky}dx,\qquad\widecheck{g}(\xi,k)=\frac{1}{(2\pi)^2}\int_{\mathbb{R}}\int_{y=0}^{2\pi} g(x,y)e^{-ix\xi}e^{-iky}dydx.
\end{split}
\end{equation*}
Since we will take Fourier transforms, it will be convenient to replace the integral over $[0,1)$ in \eqref{MixedTypeNorms} by an integral over $\mathbb{R}$. To do this, we introduce a Gaussian cutoff in time and let
\begin{equation}\label{DefJGamma}
J_\gamma:=\Vert e^{-\frac{1}{4}(t-\gamma)^2}e^{it\Delta_{\mathbb{R}\times\mathbb{T}}}\widehat{f}\Vert_{L^4_{x,y,t}(\mathbb{R}\times\mathbb{T}\times\mathbb{R})}.
\end{equation}
%The Gaussian weight in time in the definition of $J_\gamma$ allows to replace integration over a finite interval of time by a global-in-time integration and leads to nicer formulas for Fourier transforms in time.
To prove \eqref{Stric}, it will suffice to control the $\ell^8$-norm of $J_\gamma$. For simplicity of presentation, we let
\begin{equation*}
\begin{split}
{\vec \xi}&=(\xi_1,\xi_2,\xi_3,\xi_4),\qquad\quad {\vec k}=(k_1,k_2,k_3,k_4),\\
\langle \xi\rangle&=\xi_1-\xi_2+\xi_3-\xi_4=\langle  {\vec \xi},(1,-1,1,-1)\rangle,\qquad\langle { k}\rangle=k_1-k_2+k_3-k_4,\\
f_j&=f(\xi_j,k_j),\,\, j\in\{1,3\},\qquad f_j=\overline{f}(\xi_j,k_j),\,\, j\in\{2,4\},\\
Q( \xi, k)&=\vert \xi_1\vert^2+\vert \xi_3\vert^2-\vert \xi_2\vert^2-\vert \xi_4\vert^2+\vert k_1\vert^2+\vert k_3\vert^2-\vert k_2\vert^2-\vert k_4\vert^2.
\end{split}
\end{equation*}
We substitute $t\to t+\gamma$ in \eqref{DefJGamma} and expand $J_\gamma^4$ into
\begin{equation*}
\begin{split}
J_\gamma^4&=\int_{x,y,t}\left[\sum_{k_1\dots k_4} \int_{\xi_1\dots\xi_4}\Pi_{j=1}^4 f_j\cdot e^{- t^2}e^{-i(t+\gamma)Q(\xi, k)}\cdot e^{i x\langle \xi\rangle}e^{iy\langle  k\rangle} d{\vec\xi}\right] dxdydt\\
&=4\pi^\frac{5}{2}\sum_{k_1\dots k_4} \int_{\xi_1\dots\xi_4}\Pi_j f_j\cdot e^{-\frac{1}{4} (Q(\xi, k))^2}e^{-i\gamma Q(\xi,k)}\cdot \delta(\langle \xi\rangle)\delta(\langle k\rangle) d{\vec \xi}.
\end{split}
\end{equation*}
An argument of Takaoka-Tzevtkov \cite{TaTz} shows that each individual $J_{\gamma}^{4}$ is bounded, but we need to handle the sum in $\gamma$. We square $J_{\gamma}^{4}$ and sum over $\gamma$ to get
\begin{equation}\label{DefN}
\begin{split}
\mathcal{J}&:=\sum_{\gamma\in\mathbb{Z}}J_\gamma^8\\
&=16\pi^5\sum_{\substack{k_1\dots k_4\\ k^\prime_1\dots k^\prime_4}} \int_{\substack{\xi_1\dots\xi_4,\\\xi_1^\prime\dots\xi_4^\prime}}\Pi_{j=1}^4 f_j\overline{\Pi_{l=1}^4f_l^\prime}\cdot  e^{-\frac{1}{4} (Q(\xi,k))^2}e^{-\frac{1}{4} (Q(\xi^\prime, k^\prime))^2}\cdot \sum_\gamma e^{-i\gamma \left[ Q(\xi, k)-Q(\xi^\prime, k^\prime)\right]}\\
&\qquad\cdot \delta(\langle \xi\rangle) \delta(\langle \xi^\prime\rangle)\delta(\langle k\rangle)\delta(\langle k^\prime\rangle) d{\vec \xi}d{\vec \xi^\prime}.
\end{split}
\end{equation}
Using Poisson summation in $\gamma$ we observe that
\begin{equation*}
\begin{split}
\sum_{\gamma\in\mathbb{Z}} e^{-i\gamma \left[ Q( \xi,k)-Q(\xi^\prime, k^\prime)\right]}&=2\pi \sum_{\mu\in 2\pi \mathbb{Z}} \delta(\mu-Q(\xi, k)+Q(\xi^\prime, k^\prime)).
\end{split}
\end{equation*}
Introducing the new notations
\begin{equation*}
\begin{split}
\Xi&:=(\xi_1,\xi_3,\xi_2^\prime,\xi_4^\prime),\qquad\Xi^\prime:=(\xi_2,\xi_4,\xi_1^\prime,\xi_3^\prime),\\
K&:=(k_1,k_3,k_2^\prime,k_4^\prime),\qquad K^\prime:=(k_2,k_4,k_1^\prime,k_3^\prime),\\
F(\Xi,K)&:=f(\xi_1,k_1)f(\xi_3,k_3)f(\xi_2^\prime,k_2^\prime)f(\xi_4^\prime,k_4^\prime),\quad F(\Xi^\prime,K^\prime):=f(\xi_2,k_2)f(\xi_4,k_4)f(\xi_1^\prime,k_1^\prime)f(\xi_3^\prime,k_3^\prime),\\
\phi_\mu&:=\mu-Q(\xi, k)+Q(\xi^\prime, k^\prime)=\mu-\vert\Xi\vert^2-\vert K\vert^2+\vert\Xi^\prime\vert^2+\vert K^\prime\vert^2,
\end{split}
\end{equation*}
we arrive at
\begin{equation*}
\begin{split}
\mathcal{J}
&=32\pi^6 \sum_{K,K^\prime\in\mathbb{Z}^4} \int_{\Xi,\Xi^\prime}F(\Xi,K)\overline{F(\Xi^\prime,K^\prime)} \cdot \mathcal{K}(\Xi,K;\Xi^\prime,K^\prime)\cdot d\Xi d\Xi^\prime\\
\mathcal{K}(\Xi,K;\Xi^\prime,K^\prime)&:= e^{-\frac{1}{4} \left[ Q(\xi, k)^2+Q(\xi^\prime, k^\prime)^2\right]}\cdot \sum_{\mu\in2\pi \mathbb{Z}}\delta(\phi_\mu)\delta(\langle \xi\rangle) \delta(\langle\xi^\prime\rangle)\delta(\langle k\rangle)\delta(\langle k^\prime\rangle).\\
\end{split}
\end{equation*}
Using the Schur test, the inequality \eqref{Stric} follows from the next lemma.

\begin{lemma}\label{ShurLem}

With the notations above,
\begin{equation*}
\begin{split}
\sup_{(\Xi,K)\in\mathbb{R}^4\times\mathbb{Z}^4}\sum_{K^\prime\in\mathbb{Z}^4}\int\mathcal{K}(\Xi,K;\Xi^\prime, K^\prime)d\Xi^\prime<\infty.
\end{split}
\end{equation*}

\end{lemma}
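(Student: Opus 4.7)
The plan is to perform a sequence of explicit reductions that diagonalize the geometry, and then to bound the resulting lattice sum by elementary means. First I would use the four linear delta factors $\delta(\langle\xi\rangle)\delta(\langle\xi'\rangle)\delta(\langle k\rangle)\delta(\langle k'\rangle)$ to eliminate $\xi_4,k_4,\xi_3',k_3'$. Introducing the centered variables $u:=(\xi_2-\xi_4)/2\in\mathbb{R}$, $v:=(\xi_1'-\xi_3')/2\in\mathbb{R}$, $m:=(k_2-k_4)/2$, $n:=(k_1'-k_3')/2$, where $(m,n)$ range over a shifted sublattice of $(\tfrac{1}{2}\mathbb{Z})^2$ determined by the parities of $k_1+k_3$ and $k_2'+k_4'$, a direct computation yields the clean normal form
\[
Q(\xi,k)=2(A-u^2-m^2),\qquad Q(\xi',k')=2(v^2+n^2-B),
\]
with $A:=\tfrac{1}{4}[(\xi_1-\xi_3)^2+(k_1-k_3)^2]\ge 0$ and $B:=\tfrac{1}{4}[(\xi_2'-\xi_4')^2+(k_2'-k_4')^2]\ge 0$ fixed by $(\Xi,K)$. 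Writing $\alpha:=A-m^2$, $\beta:=B-n^2$, the remaining delta $\delta(\phi_\mu)$ collapses to $\tfrac{1}{2}\delta(u^2+v^2-\rho_\mu^2)$ with $\rho_\mu^2:=\alpha+\beta-\mu/2$.

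Next, for each $\mu\in2\pi\mathbb{Z}$ and each $(m,n)$ with $\rho_\mu^2>0$, the coarea formula in polar coordinates $u=\rho_\mu\cos\theta$, $v=\rho_\mu\sin\theta$ reduces the continuous integral to $\tfrac{1}{4}\int_0^{2\pi}e^{-Q^2/4}e^{-Q(\xi',k')^2/4}\,d\theta$. The key point is that $Q-Q(\xi',k')=\mu$ identically on the constraint surface, so $Q^2+Q(\xi',k')^2\geq \mu^2/2$ and
\[
e^{-(Q^2+Q(\xi',k')^2)/4}=e^{-\mu^2/8}\cdot e^{-(Q+Q(\xi',k'))^2/8}.
\]
Since $Q+Q(\xi',k')=2(\alpha-\beta)-2\rho_\mu^2\cos(2\theta)$, substituting $s:=(Q+Q(\xi',k'))/2$ converts the angular integral into
\[
\tfrac{1}{2}\,e^{-\mu^2/8}\int_{-B'}^{A'}\frac{e^{-s^2/2}\,ds}{\sqrt{(s+B')(A'-s)}},\qquad A':=2\alpha-\mu/2,\ B':=2\beta-\mu/2.
\]
The classical identity $\int_{-B'}^{A'}(s+B')^{-1/2}(A'-s)^{-1/2}\,ds=\pi$, combined with the Gaussian concentration of $s$ near $0$ (where $A'-s\asymp A'$ and $s+B'\asymp B'$ when $A',B'\gtrsim 1$), then yields the uniform estimate $\lesssim e^{-\mu^2/8}/\sqrt{(1+A'_+)(1+B'_+)}$, supplemented by additional Gaussian decay in $A'_-,B'_-$ when $\alpha$ or $\beta$ is negative.

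Finally I would sum over the shifted lattice $(m,n)$ and over $\mu$. The $(m,n)$-sum factors as a product of two one-dimensional sums of the form $\sum_{m}(1+2(A-m^2)_+)^{-1/2}$, each of which I would bound uniformly in $A\ge 0$ by comparison with the continuous integral $\int dx/\sqrt{1+2(A-x^2)_+}=O(1)$: away from $|m|\approx\sqrt{A}$ the summand is smooth and Riemann-approximated by the integral, while at most $O(1)$ lattice points lie within distance $1$ of $\pm\sqrt{A}$, each contributing a term $\leq 1$. Lattice contributions with $m^2>A$ (i.e., $\alpha<0$) are controlled by the extra Gaussian factor $e^{-\alpha^2/2}$. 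The $\mu$-sum is bounded by $\sum_{\mu\in 2\pi\mathbb{Z}}e^{-\mu^2/8}<\infty$. The main obstacle is the second step: the classical identity alone gives only $O(1)$ per $(m,n,\mu)$, which does not sum, so the Gaussian concentration must be combined delicately with the $1/\sqrt{A'B'}$ factor --- uniformly across all sign regimes of $(A',B')$ --- to produce a bound whose lattice sum converges uniformly in $(\Xi,K)$.
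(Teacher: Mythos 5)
Your reduction to the clean normal form $Q=2(A-u^2-m^2)$, $Q'=2(v^2+n^2-B)$, the collapse of $\delta(\phi_\mu)$ to $\tfrac12\delta(u^2+v^2-\rho_\mu^2)$, and the polarization trick to extract $e^{-\mu^2/8}$ all match the opening moves of the paper's proof of Lemma \ref{ShurLem}. After that point the two arguments genuinely diverge. You parametrize the constraint circle in $(u,v)$ by polar coordinates and integrate out the angle, reducing everything to the explicit weighted Beta-type integral
$\int_{-B'}^{A'} e^{-s^2/2}\,(s+B')^{-1/2}(A'-s)^{-1/2}\,ds$ with $A'=2\alpha-\mu/2$, $B'=2\beta-\mu/2$, and then you estimate the resulting lattice sum over $(m,n,\mu)$ directly. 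The paper instead keeps the delta function and decomposes $(\zeta,\kappa)$-space into thin dyadic annuli $\mathcal S_j$ and a remainder $\mathcal S_\infty$ around the sphere of radius $R$, bounding the Gaussian weight on each shell via \eqref{ControlExp} and finishing with the one-variable counting bound \eqref{SimpleBd2} together with the annulus-volume Lemma \ref{VolComp} (essentially Takaoka--Tzvetkov). Your route buys a closed-form expression where the Gaussian, the two square-root singularities, and the parameters $(A',B',\mu)$ interact explicitly; the paper's route is more geometric and localizes the Gaussian decay shell by shell, making the uniformity in $(R,R',A,\mu)$ transparent at the cost of a case analysis ($\mathcal S_\infty\mathcal S_\infty$, $\mathcal S_j\mathcal S_\infty$, $\mathcal S_j\mathcal S_p$).

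Two caveats about your write-up. First, the displayed comparison integral $\int dx/\sqrt{1+2(A-x^2)_+}$ is not $O(1)$ --- the integrand equals $1$ for $|x|>\sqrt A$, so the integral over all of $\mathbb R$ diverges; what you actually mean (and what is true) is that $\int_{|x|\le\sqrt A}dx/\sqrt{1+2(A-x^2)}=O(1)$ uniformly in $A\ge0$, while the regime $m^2>A$ must be controlled by the Gaussian factor coming from $e^{-s^2/2}$ on the constraint interval (or, when $A'$ is near $0$ because $\mu\approx4\alpha<0$, by the $e^{-\mu^2/8}$ factor instead --- it is the combination $\mu^2/8+(A')^2/2\ge\alpha^2$ that is uniformly coercive, not $(A')^2$ alone). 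Second, the core estimate of your Step 2, namely $\lesssim e^{-\mu^2/8}/\sqrt{(1+A'_+)(1+B'_+)}$ with Gaussian improvements for negative $A'$ or $B'$, is asserted rather than proved; you yourself flag it as ``the main obstacle.'' I checked it in the representative regimes and believe it holds, but it requires a genuine case analysis (bulk $|s|\lesssim1$, near-endpoint $s\approx A'$ or $s\approx-B'$, and the mixed-sign regime), and as written the proposal does not carry this out. With those gaps filled, this would give an alternative proof of Lemma \ref{ShurLem} that bypasses the explicit decomposition \eqref{DefSets} and the appeal to Lemma \ref{VolComp}.
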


\begin{proof}[Proof of Lemma \ref{ShurLem}]
We need to bound
\begin{equation}\label{SumShur1}
\begin{split}
\sum_{\mu\in 2\pi\mathbb{Z}}\sum_{K^\prime\in\mathbb{Z}^4}\int_{\Xi^\prime\in\mathbb{R}^4}&e^{-\frac{1}{4} \left[ Q(\xi, k)^2+Q(\xi^\prime, k^\prime)^2\right]}\delta(\phi_\mu)\delta(\langle\xi\rangle) \delta(\langle \xi^\prime\rangle)\delta(\langle k\rangle)\delta(\langle k^\prime\rangle) d\Xi^\prime
\end{split}
\end{equation}
uniformly in $(\Xi,K)\in\mathbb{R}^4\times\mathbb{Z}^4$. Below we occasionally write $Q = Q(\xi, k)$ and $Q' = Q(\xi', k').$ 

\medskip

Using the polarization identity on the support of $\delta(\mu-Q+Q^\prime)$, we can bound
\begin{equation*}
\begin{split}
e^{-\frac{1}{4} \left[ Q^2+(Q^\prime)^2\right]}&=e^{-\frac{1}{8} \left[ Q^2+(Q^\prime)^2\right]}e^{-\frac{1}{16} \left[ (Q+Q^\prime)^2+(Q-Q^\prime)^2\right]}\le e^{-\frac{1}{16}\mu^2}e^{-\frac{1}{8} \left[ Q^2+(Q^\prime)^2\right]}.
\end{split}
\end{equation*}
%and we can easily sum over $\mu$ in \eqref{SumShur1}. 
Moreover, when $\langle \xi\rangle=0=\langle k\rangle$ we can substitute $$\xi_4 = \xi_1 - \xi_2 + \xi_3 \ \  \text{ and } \ \ k_4 = k_1 - k_2 + k_3 $$ into $Q$ and then factor to obtain 
\begin{equation*}
\begin{split}
Q( \xi, k)&=-2\left[\vert (\xi_2-c_x,k_2-c_y)\vert^2-R^2\right],\\
(c_x,c_y)&=(\frac{\xi_1+\xi_3}{2},\frac{k_1+k_3}{2}),\qquad R^2=\left(\frac{\xi_1-\xi_3}{2}\right)^2+\left(\frac{k_1-k_3}{2}\right)^2.
\end{split}
\end{equation*}
A similar identity holds for $Q'$ when $\langle \xi ' \rangle = 0 = \langle k' \rangle$. Indeed, on the support of $\delta(\langle \xi' \rangle)\delta(\langle k' \rangle)$ we can substitute $$ \xi_{3}' = -\xi_{1}' + \xi_{2}' + \xi_{4}' \ \ \text{ and } \ \ k_{3}' = -k_{1}' + k_{2}' + k_{4}' $$ into $Q'$ and factor to obtain 
\begin{equation*}
\begin{split}
Q( \xi', k')&=2\left[\vert (\xi_{1}' -c'_x,k_{1}' -c'_y)\vert^2-(R')^2\right],\\
(c_{x}',c'_y)&=(\frac{\xi'_2+\xi'_4}{2},\frac{k'_2 + k'_4}{2}),\qquad (R')^2=\left(\frac{\xi'_2-\xi'_4}{2}\right)^2+\left(\frac{k'_2-k'_4}{2}\right)^2.
\end{split}
\end{equation*}
With these substitutions made, notice that
\begin{equation*}
\begin{split}
\phi_{\mu} = \mu + 2[ |(\xi_2 - c_x, k_2 - c_y)|^2 - R^2] + 2[ |(\xi'_1 - c'_x, k'_1 - c'_y )|^2 - (R')^2 ]
\end{split}
\end{equation*}
and therefore
\begin{equation*}
\begin{split}
\delta(\phi_{\mu}) = \frac{1}{2}\delta( |(\xi_2 - c_x, k_2 - c_y)|^2 +  |(\xi'_1 - c'_x, k'_1 - c'_y )|^2 - A_{\mu}  ), \qquad A_{\mu} = \frac{ R^2 + (R')^2 - \mu}{2}.
\end{split}
\end{equation*}
Using these observations to estimate \eqref{SumShur1} we arrive at
\begin{align*} \eqref{SumShur1} \leq \frac{1}{2}\sum_{\mu \in 2\pi \mathbb{Z} } e^{-\frac{1}{16} \mu^2} \sum_{k_2, k'_1} \int_{\mathbb{R}^2}&e^{-\frac{1}{2} \left[ \left[\vert (\xi_{2} -c_x,k_{2} -c_y)\vert^2-R^2\right]^2+\left[\vert (\xi_{1}' -c'_x,k_{1}' -c'_y)\vert^2-(R')^2\right]^2\right]} \\ &\delta( |(\xi_2 - c_x, k_2 - c_y)|^2 +  |(\xi'_1 - c'_x, k'_1 - c'_y )|^2 - A_{\mu}  )  d\xi_2 d\xi'_1
\end{align*} with $c_x, c_y, c'_x, c'_y,$ and $A_{\mu}$ defined as above. Notice that $R$ and $R^\prime$ only depend on $(\Xi,K)$, and these variables have been fixed. Since we also have exponential decay in $\mu$ it therefore suffices to bound the integral
\begin{equation}\label{DefI}
\begin{split}
{\bf I} :=\sum_{\kappa,\kappa^\prime}\int_{\zeta,\zeta^\prime}&e^{-\frac{1}{2}\left[ \vert\vert(\zeta,\kappa)-{\vec C}\vert^2-R^2\vert^2+\vert \vert(\zeta^\prime,\kappa^\prime)-{\vec C}^\prime\vert^2-(R^\prime)^2\vert^2\right]}\delta(\vert (\zeta,\kappa)-{\vec C}\vert^2+\vert (\zeta^\prime,\kappa^\prime)-{\vec C^\prime}\vert^2-A)d\zeta d\zeta^\prime
\end{split}
\end{equation}
uniformly in ${\vec C},{\vec C}^\prime\in\mathbb{R}^2$, $A,R,R^\prime\in\mathbb{R}$. Moreover, since $2c_y$ and $2c'_y$ are both integers we can assume the second components of $\vec{C}, \vec{C'}$ are in $\frac{1}{2}\mathbb{Z}$.  

\medskip

The integral in \eqref{DefI} is invariant with respect to translation on $(\mathbb{R} \times \mathbb{Z}) \times (\mathbb{R} \times \mathbb{Z})$, and we may therefore assume that ${\vec C}=(0,c)$, ${\vec C}^\prime=(0,c^\prime)$ for $c,c^\prime\in\{0, \frac{1}{2} \}$. To control ${\bf I}$ we introduce sets where the exponential factors behave nicely. When $R\ge 50$, we let
\begin{equation}\label{DefSets}
\begin{split}
\mathcal{S}_0&:=\{\vert(\zeta,\kappa)-{\vec C}\vert-R\vert\le R^{-1}\},\\
\mathcal{S}_j&:=\{\vert(\zeta,\kappa)-{\vec C}\vert-R\vert\in R^{-1}[j,j+1]\},\qquad 1\le j \le R^{\frac{1}{2}}+1,\\
\mathcal{S}_\infty&:=\{\vert (\zeta,\kappa)-{\vec C}\vert -R\vert\ge R^{-\frac{1}{2}}\}
\end{split}
\end{equation}
and when $R\le 50$, we let $S_j = \emptyset$ and $\mathcal{S}_\infty=\mathbb{R}\times\mathbb{Z}$. These satisfy
\begin{equation}\label{ControlExp}
\begin{split}
\mathfrak{1}_{\mathcal{S}_j}(\zeta,\kappa)e^{-\frac{1}{2}\left[ \vert\vert(\zeta,\kappa) - \vec{C} \vert^2-R^2\vert^2\right]}&\lesssim e^{-\frac{1}{2} j^2}\mathfrak{1}_{\mathcal{S}_j}(\zeta,\kappa),\quad 0\le j\le R^\frac{1}{2}+1,\\
\mathfrak{1}_{\mathcal{S}_\infty}(\zeta,\kappa)e^{-\frac{1}{2}\left[ \vert\vert(\zeta,\kappa) - \vec{C} \vert^2-R^2\vert^2\right]}&\lesssim e^{-\frac{1}{2} \vert(\zeta,\kappa) - \vec{C} \vert}\mathfrak{1}_{\mathcal{S}_\infty}(\zeta,\kappa).
\end{split} 
\end{equation}
Indeed, the estimate on $\mathcal{S}_{j}$ in \eqref{ControlExp} follows by factoring the term in the exponential. To prove the estimate on $\mathcal{S}_{\infty}$ note that if $(\zeta, \kappa) \in \mathcal{S_{\infty}}$ and $R \geq 50$ then
\begin{equation*}
\begin{split}
\vert\vert(\zeta,\kappa) - \vec{C} \vert^2-R^2\vert^2 \geq \big[R^{-\frac{1}{2}}(|(\zeta, \kappa) - \vec{C}| + R)  \big]^2\geq |(\zeta, \kappa) - \vec{C}| + R.
\end{split}
\end{equation*}
On the other hand
\begin{equation*}
\begin{split}
\vert\vert(\zeta,\kappa) - \vec{C} \vert^2-R^2\vert^2 \geq |(\zeta,\kappa) - \vec{C} |-R-2,
\end{split}
\end{equation*}
and the estimate in \eqref{ControlExp} in $\mathcal{S}_{\infty}$ follows if $R\le 50$.
%On the other hand if  $R\le 50$ and $|(\zeta, \kappa) - \vec{C}| \geq R+1$ then
%\begin{equation*}
%\begin{split}
%\vert\vert(\zeta,\kappa) - \vec{C} \vert^2-R^2\vert^2 \geq |(\zeta,\kappa) - \vec{C} |,
%\end{split}
%\end{equation*}
%and the estimate in \eqref{ControlExp} in $\mathcal{S}_{\infty}$ follows. Finally if $R+|(\zeta, \kappa) - \vec{C}| \leq 100$ the estimate is trivial.  

\medskip

We first use \eqref{SimpleBd1} from Lemma \ref{simpleLemma} to control the contribution of $\mathcal{S}_{\infty}$ to \eqref{DefI}. In particular  
\begin{equation*}
\begin{split}
{\bf I}_{\infty\infty}& :=\sum_{\kappa,\kappa^\prime}\iint \mathfrak{1}_{\mathcal{S}_\infty}(\zeta,\kappa)\mathfrak{1}_{\mathcal{S}_\infty}(\zeta^\prime,\kappa^\prime)e^{-\frac{1}{2}\left[ \vert\vert(\zeta,\kappa - c)\vert^2-R^2\vert^2+\vert \vert(\zeta^\prime,\kappa^\prime - c')\vert^2-(R^\prime)^2\vert^2\right]}\\
&\qquad\qquad\,\cdot \delta(\vert\zeta\vert^2+\vert\kappa-c\vert^2+\vert\zeta^\prime\vert^2+\vert\kappa^\prime-c^\prime\vert^2-A)d\zeta d\zeta^\prime\\
&\,\lesssim\sum_{\kappa,\kappa^\prime}e^{-\frac{1}{2}(\vert\kappa - c\vert+\vert\kappa^\prime - c'\vert)}\iint \delta(\vert \zeta\vert^2+\vert \kappa - c\vert^2+\vert \zeta^\prime\vert^2+\vert\kappa^\prime - c^\prime\vert^2-A)d\zeta d\zeta^\prime\\
&\,\lesssim \sup_{B\in\mathbb{R}}\iint \delta( \vert\zeta\vert^2+\vert\zeta^\prime\vert^2-B)d\zeta d\zeta^\prime\lesssim 1.
\end{split}
\end{equation*}

\medskip

Next, we consider
\begin{equation*}
\begin{split}
{\bf I}_{j\infty}&=\sum_{\kappa,\kappa^\prime}\iint \mathfrak{1}_{\mathcal{S}_j}(\zeta,\kappa)\mathfrak{1}_{\mathcal{S}_\infty}(\zeta^\prime,\kappa^\prime)e^{-\frac{1}{2}\left[ \vert\vert(\zeta,\kappa)\vert^2-R^2\vert^2+\vert \vert(\zeta^\prime,\kappa^\prime)\vert^2-(R^\prime)^2\vert^2\right]}\\
&\qquad\qquad\cdot \delta(\vert \zeta\vert^2+\vert\kappa - c\vert^2+\vert \zeta^\prime\vert^2+\vert\kappa^\prime - c^\prime\vert^2-A)d\zeta d\zeta^\prime\\
&\lesssim e^{-\frac{1}{2}j^2}\sum_{\kappa^\prime}e^{-\frac{1}{2}\vert\kappa^\prime - c'\vert}\sup_B \sum_{\kappa}\iint e^{-\frac{1}{2}\vert\zeta^\prime\vert}\mathfrak{1}_{\mathcal{S}_j}(\zeta,\kappa)\delta(\vert \zeta\vert^2+\vert \kappa - c\vert^2+\vert \zeta^\prime\vert^2-B) d\zeta d\zeta^\prime.\\
\end{split}
\end{equation*}
We can split the integral above into two regions: $(i)$ when $\vert \kappa\vert\in [R-10,R+2]$, the sum is only over a uniformly bounded number of $\kappa$ and we can use \eqref{SimpleBd1}; and $(ii)$ when $\vert \kappa\vert\le R-10$, in which case we use \eqref{SimpleBd2} and the rapid decay of $e^{-\vert\zeta^\prime\vert}$. In both cases, we obtain a bounded contribution after summing over $j$.

\medskip

Finally, by symmetry, it remains to consider:
\begin{equation*}
\begin{split}
{\bf I}_{jp}&=\sum_{\kappa,\kappa^\prime}\iint \mathfrak{1}_{\mathcal{S}_j}(\zeta,\kappa)\mathfrak{1}_{\mathcal{S}_p}(\zeta^\prime,\kappa^\prime)\mathfrak{1}_{\{\vert\zeta^\prime\vert\le\vert\zeta\vert\}}e^{-\frac{1}{2}\left[ \vert\vert(\zeta,\kappa - c)\vert^2-R^2\vert^2+\vert \vert(\zeta^\prime,\kappa^\prime - c')\vert^2-(R^\prime)^2\vert^2\right]}\\
&\qquad\qquad \cdot \delta(\vert (\zeta,\kappa - c,\zeta^\prime,\kappa^\prime- c')\vert^2-A)d\zeta d\zeta^\prime.\\
\end{split}
\end{equation*}
Note that we may assume $R,R^\prime\ge50$ since otherwise $\mathcal{S}_{j}$ or $\mathcal{S}_{p}$ is empty. Using \eqref{ControlExp} we estimate
\begin{equation*}
\begin{split}
{\bf I}_{jp}&\le 2e^{-\frac{1}{2} (j^2+p^2)}\left[{\bf J}_{jp}^1+{\bf J}_{jp}^2\right],\\
{\bf J}^1_{jp}&=\sum_{R-10\le \vert \kappa\vert,\vert\kappa^\prime\vert\le R+10}\iint\mathfrak{1}_{\mathcal{S}_j}\mathfrak{1}_{\mathcal{S}_p}\delta(\vert \zeta\vert^2+\vert\kappa - c\vert^2+\vert\zeta^\prime\vert^2+\vert\kappa^\prime - c^\prime\vert^2-A)d\zeta d\zeta^\prime,\\
{\bf J}^2_{jp}&=\sum_{\kappa^\prime}\int_{\zeta^\prime}\mathfrak{1}_{\mathcal{S}_p}\left( \sum_{\vert \kappa\vert\le R-10}\int_\zeta \mathfrak{1}_{\mathcal{S}_j}\delta(\vert \zeta\vert^2+\vert\kappa - c\vert^2+\vert\zeta^\prime\vert^2+\vert\kappa^\prime - c^\prime\vert^2-A)d\zeta\right) d\zeta^\prime.\\
\end{split}
\end{equation*}
For ${\bf J}^1_{jp}$, we observe that the sum is only over a uniformly bounded number of $\kappa,\kappa^\prime$ and we can use \eqref{SimpleBd1}. For ${\bf J}^2_{jp}$, we can use \eqref{SimpleBd2} followed by Lemma \ref{VolComp}. Summing over $j,p$, we obtain an acceptable contribution.

\end{proof}

In the proof above, we have use two simple bounds that allow us to cancel two integrals.

 \begin{lemma}\label{simpleLemma}
 
 We have 

\begin{equation}\label{SimpleBd1}
\begin{split}
\sup_{A\in\mathbb{R}}\iint_{\mathbb{R}^2} \delta(\zeta^2+\eta^2-A)d\zeta d\eta& =\pi
\end{split}
\end{equation}
and, for $\mathcal{S}_j$ defined as in \eqref{DefSets} and $R\ge50$,
\begin{equation}\label{SimpleBd2}
\begin{split}
\sup_{A\in\mathbb{R}}\sum_{\vert\kappa\vert\le  R-10}\int_{\mathbb{R}} \mathfrak{1}_{\mathcal{S}_j}(\zeta,\kappa)\delta(\zeta^2-A)d\zeta\lesssim 1,\qquad 0\le j\le R^\frac{1}{2}+1.
\end{split}
\end{equation}  \end{lemma}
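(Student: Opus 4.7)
The plan is to handle the two parts of the lemma separately. For \eqref{SimpleBd1} I would pass to polar coordinates $(\zeta,\eta) = (r\cos\theta, r\sin\theta)$, reducing the integral to
\begin{equation*}
\int_0^{2\pi}\int_0^\infty \delta(r^2 - A)\, r\, dr\, d\theta.
\end{equation*}
For $A > 0$ the identity $\delta(r^2-A) = (2\sqrt{A})^{-1}\delta(r - \sqrt{A})$ on $\{r>0\}$ yields exactly $\pi$, while for $A \leq 0$ the integral vanishes; the supremum is therefore $\pi$.

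For \eqref{SimpleBd2} I would first apply $\delta(\zeta^2 - A) = (2\sqrt{A})^{-1}[\delta(\zeta - \sqrt{A}) + \delta(\zeta + \sqrt{A})]$ when $A > 0$ (and note that the integral vanishes for $A \leq 0$), reducing the claim to proving
\begin{equation*}
\frac{1}{\sqrt{A}}\cdot\#\bigl\{\kappa \in \mathbb{Z} : |\kappa| \leq R - 10,\ (\pm\sqrt{A},\kappa) \in \mathcal{S}_j\bigr\} \lesssim 1
\end{equation*}
uniformly in $A > 0$, $0 \leq j \leq R^{1/2} + 1$, and $R \geq 50$. Unpacking the definition from \eqref{DefSets}, I would multiply the inequality $\bigl||(\sqrt{A},\kappa - c)| - R\bigr| \leq (j+1)R^{-1}$ by the factor $|(\sqrt{A},\kappa - c)| + R \leq 2R + O(R^{-1})$ to convert the annular condition into the equivalent linear statement $|A + (\kappa - c)^2 - R^2| \leq D$, with $D := 2(j+1) + O(1) \lesssim R^{1/2}$.

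Writing $B := R^2 - A$, the admissible $\kappa$ are precisely those with $(\kappa - c)^2 \in [B - D, B + D]$. The crucial observation is that the cutoff $|\kappa| \leq R - 10$ forces $(\kappa - c)^2 \leq (R-9)^2$, so the annular constraint is satisfiable only when $A \geq 18R - O(\sqrt{R}) \gtrsim R$. A routine one-dimensional lattice count in $\{x \in \mathbb{R} : x^2 \in [B-D, B+D]\}$ gives at most $O(D/\sqrt{B}) + O(1)$ points when $B \geq D$ and $O(\sqrt{D}) + O(1)$ points when $0 \leq B < D$. Multiplying by $1/\sqrt{A}$ and using $A \gtrsim R$ with $D \lesssim R^{1/2}$, both cases produce
\begin{equation*}
\frac{1}{\sqrt{A}}\bigl(\min(D/\sqrt{B},\sqrt{D}) + 1\bigr) \lesssim \sqrt{D/R} + R^{-1/2} \lesssim R^{-1/4},
\end{equation*}
and summing over the two signs $\zeta = \pm\sqrt{A}$ contributes only a factor of two.

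The main technical point is the passage from the quadratic annular condition to a linear interval condition on $(\kappa-c)^2$, and then verifying that the lattice count is tamed by the factor $1/\sqrt{A}$. The lower bound $A \gtrsim R$ extracted from the cutoff $|\kappa| \leq R - 10$ is precisely what makes this possible; beyond that observation the argument is straightforward bookkeeping.
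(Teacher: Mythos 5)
Your proof is correct. The first identity \eqref{SimpleBd1} is handled exactly as in the paper (polar coordinates). For \eqref{SimpleBd2}, both you and the paper exploit the same geometric fact -- restricting $|\kappa|\le R-10$ on the annulus $\mathcal{S}_j$ forces $|\zeta|=\sqrt{A}$ to be large -- but the bookkeeping differs. The paper derives the $\kappa$-dependent lower bound $|\zeta|\ge R^{1/2}(R-|\kappa|-1)^{1/2}$, uses it to control the Jacobian $1/(2|\zeta|)$ from the delta, and then simply sums the convergent series $\sum_{|\kappa|\le R-10}R^{-1/2}(R-|\kappa|-1)^{-1/2}\lesssim 1$ over \emph{all} $\kappa$ in range, with no attempt to count which $\kappa$ actually contribute. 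You instead invest only the crude uniform Jacobian bound $1/\sqrt{A}\lesssim R^{-1/2}$, and compensate by carefully counting the contributing $\kappa$: linearizing the annular constraint to $(\kappa-c)^2\in[B-D,B+D]$ with $D\lesssim R^{1/2}$, the one-dimensional lattice count is $\lesssim\sqrt{D}+1\lesssim R^{1/4}$, giving the stronger conclusion $\lesssim R^{-1/4}$. In short, the paper is precise on the Jacobian and lossy on the count, while you are lossy on the Jacobian and precise on the count; both routes close cleanly, and yours incidentally yields a small power saving in $R$ that is not needed for the lemma.
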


\begin{proof}[Proof of Lemma \ref{simpleLemma}]
The first bound is direct after passing to polar coordinates. To prove \eqref{SimpleBd2}, we may assume $R\ge 50$. We first claim that
\begin{equation} \label{kappaId}
\begin{split}
(\zeta,\kappa)\in\mathcal{S}_j,\quad \vert \kappa\vert\le R-10\quad\Rightarrow\quad \vert\zeta\vert\ge R^\frac{1}{2}(R-\vert\kappa\vert-1)^\frac{1}{2}
\end{split}
\end{equation}
Indeed, on $\mathcal{S}_j$, we see that $\zeta^2+(\kappa-c)^2\ge R^2-3\sqrt{R}$ for some $c\in\{0,\frac{1}{2}\}$ and
\begin{equation*}
\begin{split}
\zeta^2\ge (R+\vert \kappa-c\vert)(R-\vert \kappa-c\vert)-3\sqrt{R}\ge R(R-\vert \kappa\vert-1)+R/2-3\sqrt{R}.
\end{split}
\end{equation*}
Eliminating some terms and taking square roots give the result. To prove \eqref{SimpleBd2} we then apply a change of variables along with \eqref{kappaId} to estimate
\begin{equation*}
\begin{split}
\sum_{\vert\kappa\vert\le  R-10}\int \mathfrak{1}_{\mathcal{S}_j}(\zeta,\kappa)\delta(\zeta^2-A)d\zeta
&\lesssim \sum_{\vert\kappa\vert\le R-10}R^{-\frac{1}{2}}\left[ R-\vert\kappa\vert-1\right]^{-\frac{1}{2}}\lesssim 1,
\end{split}
\end{equation*}
which gives \eqref{SimpleBd2}. 
\end{proof}

\section{On volumes of annuli in $\mathbb{R}\times \mathbb{Z}$}

As we saw in the last section, the contribution of the integral $\textbf{I}_{jp}$ is controlled by the following geometric lemma which says that the volume of a (large and thin) annulus in $\mathbb{R}\times\mathbb{Z}$ is proportional to its volume in $\mathbb{R}^2$. The result is essentially Lemma 2.1 from \cite{TaTz}.

\begin{lemma}\label{VolComp}

For $0\le w\le 20\le R$ and $0\le \vert x\vert \le 1/2$,
\begin{equation*}
\begin{split}
V(R,w)=\vert \mathbb{R}_\zeta\times\mathbb{Z}_\kappa\cap \{R^2\le\zeta^2+(\kappa+x)^2\le(R+w)^2\}\vert\lesssim \sqrt{Rw}+Rw.
\end{split}
\end{equation*} As a consequence, for the sets in \eqref{DefSets} we have $\vert\mathcal{S}_j\vert\lesssim 1$ for $0\le j\le R^\frac{1}{2}+1$.
\end{lemma}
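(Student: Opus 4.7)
The plan is to slice by the lattice coordinate: for each $\kappa \in \mathbb{Z}$, set
\[
L_\kappa := \bigl| \{\zeta \in \mathbb{R} : R^2 \le \zeta^2 + (\kappa+x)^2 \le (R+w)^2\} \bigr|,
\]
so that $V(R,w) = \sum_\kappa L_\kappa$. There are three regimes: either $|\kappa+x| \le R$ (the \emph{inner} case), $R < |\kappa+x| \le R+w$ (the \emph{middle} case), or $|\kappa+x| > R+w$ (empty). In each, $L_\kappa$ has an explicit form involving $\sqrt{(R+w)^2-(\kappa+x)^2}$ and $\sqrt{R^2-(\kappa+x)^2}$, which I would estimate by repeatedly factoring differences of squares and then exploiting $w \le 20 \le R$.

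For the middle case, parametrize by $t := (R+w)-|\kappa+x| \in [0,w]$; then $L_\kappa = 2\sqrt{t(2R+2w-t)} \lesssim \sqrt{tR}$. At most $O(w+1)$ integers $\kappa$ qualify, corresponding (up to shifts) to $t \in \{0,1,\ldots,\lceil w\rceil\}$, so this regime contributes $\lesssim \sqrt{R}\, w^{3/2}$, which is $\lesssim \sqrt{Rw}+Rw$ since $w\le R$. For the inner case, set $s := R-|\kappa+x| \in [0,R]$; factoring gives $(R+w)^2-(\kappa+x)^2 \asymp R(s+w)$ and $R^2-(\kappa+x)^2 \asymp Rs$, hence $L_\kappa \asymp \sqrt{R}(\sqrt{s+w}-\sqrt{s})$. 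I then split at $s = w$:
\[
L_\kappa \lesssim \begin{cases} \sqrt{Rw}, & 0 \le s \le w, \\ w\sqrt{R/s}, & s > w. \end{cases}
\]
Summing the first sub-regime over the $O(w+1)$ qualifying $\kappa$'s gives $\lesssim \sqrt{Rw}+Rw$, while summing the second over the essentially integer-spaced values $s \in [w,R]$ gives $w\sqrt{R}\sum_{s\ge w} s^{-1/2} \lesssim w\sqrt{R}\cdot\sqrt{R} = Rw$. Combining the regimes yields $V(R,w) \lesssim \sqrt{Rw}+Rw$.

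For the consequence, observe that $\mathcal{S}_0$ is an annulus of radius $R$ and width $2/R$, while for $1 \le j \le R^{1/2}+1$ the set $\mathcal{S}_j$ is the union of two annuli with radii $R\pm j/R \asymp R$ and width $1/R$ (here $j/R \le R^{-1/2} \ll R$). Since the centers $\vec{C}$ lie in $\mathbb{R}\times\tfrac{1}{2}\mathbb{Z}$, a translation by an element of $\mathbb{R}\times\mathbb{Z}$ reduces to a shift $x\in\{0,-1/2\}$, and applying the main bound with $w = O(1/R)$ gives $|\mathcal{S}_j| \lesssim \sqrt{R\cdot (1/R)} + R\cdot(1/R) \lesssim 1$. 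The only delicate technical point is the case split at $s=w$ in the inner regime: crudely using the pointwise maximum $L_\kappa \lesssim \sqrt{Rw}$ over the full range $s \in [0,R]$ would produce an unacceptable $R^{3/2}w^{1/2}$, so the refined estimate $w\sqrt{R/s}$ for $s > w$ is essential.
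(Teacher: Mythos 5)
Your argument is correct and follows essentially the same strategy as the paper: slice the annulus by the integer coordinate $\kappa$, bound the slice length by $O(\sqrt{Rw})$ when $|\kappa+x|$ is within $O(w)$ of $R$ and by $O\bigl(w\sqrt{R}/\sqrt{R-|\kappa+x|}\bigr)$ deeper inside, then sum over $\kappa$. The only cosmetic difference is that you split at $s=w$ and compare to an integral, whereas the paper decomposes $\bigl||\kappa+x|-R\bigr|$ dyadically; both yield $\sqrt{Rw}+Rw$, and your treatment of the consequence $|\mathcal{S}_j|\lesssim 1$ matches the paper's intent.
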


\begin{proof}[Proof of Lemma \ref{VolComp}]

Let
\begin{equation*}
\ell(y)=\begin{cases}
\sqrt{(R+w)^2-y^2}&\hbox{ if }R\le \vert y\vert\le R+w\\
\sqrt{(R+w)^2-y^2}-\sqrt{R^2-y^2}&\hbox{ if }0\le \vert y\vert\le R
\end{cases}
\end{equation*}
be the length of the horizontal segment in the annulus under consideration at ordinate $y$. This is maximized at $\vert y\vert=R$ when it is at most $\sqrt{3Rw}$. In addition, for $2^p\le \vert \vert \kappa+x\vert-R\vert\le 2^{p+1}$ and  $32\le 2^p\le R$, we can estimate
\begin{equation*}
\begin{split}
\ell(\kappa+x)&\le\frac{2Rw}{\sqrt{R}\sqrt{R-\kappa-21}}\le 4R^\frac{1}{2}2^{-\frac{p}{2}}w.
\end{split}
\end{equation*}
Summing a bounded number of contributions when $\kappa+x\ge R-50$ and the above bound otherwise, we conclude that the volume under consideration is at most
\begin{equation*}
V\lesssim \sqrt{Rw}+R^\frac{1}{2}w\sum_{p}2^{\frac{p}{2}}\lesssim \sqrt{Rw}+Rw.
\end{equation*}

\end{proof}

\end{document}